    \newtheorem{theorem}{Theorem}[section]
    \newtheorem{proposition}[theorem]{Proposition}
    \newtheorem{corollary}[theorem]{Corollary}
\newtheorem{lem}[theorem]{Lemma}
    \theoremstyle{definition}
   \newtheorem{defn}[theorem]{Definition}
    \newtheorem{example}[theorem]{Example}
    \newtheorem{remark}[theorem]{Remark}
\def\GKdim{\operatorname{GK dim}}
    \theoremstyle{remark}
\begin{document}
    \baselineskip 17 pt

    \title[A short proof that the free
associative algebra is Hopfian]
    {A short proof that the free
associative algebra is Hopfian}

    \author[A. Belov-Kanel, Louis Rowen and Jie-Tai Yu]
    {Alexei Belov-Kanel, Louis Rowen and Jie-Tai Yu}
    \address{Department of Mathematics, Bar-Ilan University,
    Ramat-Gan 52900, Israel}
    \email{rowen@math.biu.ac.il}
    \address{ College of Mathematics, Shenzhen University, Shenzhen, China}
    \email{jietaiyu@szu.edu.cn,\
    yujietai@yahoo.com, kanelster@gmail.com}

    \thanks
    {The research of A. Belov-Kanel  and L.~Rowen was partially supported
    by the Israel Science Foundation  (grant no. 1207/12).}
    \thanks{The authors thank L. Small for many useful references}
    \thanks{The research of Jie-Tai Yu was supported by a Grant from National 1000 Plan.}

    \subjclass[2010] {Primary 13S10, 16S10. Secondary 13F20,
    14R10, 16W20, 16Z05.}

    \keywords{Automorphisms,
    polynomial
    algebras,  free associative algebras, Jacobian conjecture}

    \begin{abstract} A short proof is given of the fact that
    for various classes of algebras including the free associative
    algebra are Hopfian, i.e., every epimorphism is an automorphism. This further simplifies
 the   Dicks-Lewin solution of the Jacobian conjecture for the free associative
    algebra. It also relates to the Jacobian conjecture for
    meta-abelian algebras.
    \end{abstract}

    \maketitle

    \section{Introduction and main results}

    \smallskip

The main object of this note is to use PI-theory to simplify the
results of Dicks and Lewin~\cite{DL} on the automorphisms of the
free
algebra $F\{ X\}$, namely that 
if the Jacobian is invertible, then every
    endomorphism is an automorphism. Their
    proof is in two parts: Every epimorphism
 $F\{ X\} \to F\{ X\}$ is an isomorphism, and if the Jacobian is invertible, then every
    endomorphism is an epimorphism. We follow this route, and see
    how these arguments apply to wider classes of rings. In the appendix,
    we tie this in with the Jacobian conjecture.

    \section{Hopfian rings}
\begin{defn} An algebra $R$ is \textbf{Hopfian} if every epimorphism (i.e., onto
algebra homomorphism) $R \to R$ is an isomorphism.
 \end{defn}

Dicks and Lewin reduced the Jacobian conjecture for $F\{ X\} $ to
the question of whether $F\{ X\} $ is Hopfian, and proved it (using
some deep theory) for the free algebra in two variables. In fact,
this had already been resolved for any finite set of variables by
Orzech and Ribes~\cite{OR,O}, with a rather direct proof given in
\cite{C}.

 In this note,  relying on
considerations of growth, we give a
 quick and easy proof of a more general result (based on facts about the Gelfand-Kirillov dimension)
 which implies at once that the free associative algebra $F\{ X\} $ is
 Hopfian.

Recall that the \textbf{Gelfand-Kirillov dimension}  $\GKdim (A)$
of an affine algebra $A = F\{ a_1, \dots, a_\ell \}$ is
\begin{equation} \label{GKdim}
\GKdim (A): = \underset{n \to \infty}\varlimsup\log _n \tilde d_n,
\end{equation}
where  $A_n = \sum F a_{i_1}\cdots a_{i_n}$ and  $d_n = \dim _F
A_n$.

The standard reference on Gelfand-Kirillov dimension is \cite{KL}.
Although the $d_n$ depend on the choice of the generating set $a_1,
\dots, a_\ell$,  $\GKdim (A)$ is independent of the choice of the
generating set.  Let us tighten this fact a bit: Suppose that  $A' =
F\{ a_1', \dots, a'_\ell \}$ and $d'_n = \dim _F A'_n$. We say that
the growth rate  of the $d_n$  is less than or equal to the  growth
rate of the $d'_n$ if there are constants $c,k $ such that $d_n' \le
c d_{kn}$. This defines an equivalence, and it is easy to see that
the growth rate of $A$ with respect to any two sets of generators is
the same.

\begin{lem}\label{easylem} Suppose $R$ is an affine algebra in which
the growth of $R/I$ is less than the growth of $R$, for each ideal
$I$ of $R$.  Then $R$ is Hopfian.

In particular, if $\GKdim (R/I)< \GKdim(R)$ for all   ideals $I$
of $R$, then $R$ is Hopfian.
\end{lem}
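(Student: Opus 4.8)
The plan is to run the natural argument: an epimorphism $\phi\colon R\to R$ has a kernel $I$ with $R/I\cong R$, and the growth hypothesis forces $I=0$.

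First I would fix a finite generating set $a_1,\dots,a_\ell$ of $R$ and let $\phi\colon R\to R$ be an epimorphism, with $I=\ker\phi$. By the first isomorphism theorem $R/I\cong R$. The images $\bar a_1,\dots,\bar a_\ell$ generate $R/I$, and under the isomorphism $R/I\cong R$ they correspond to $\phi(a_1),\dots,\phi(a_\ell)$, which generate $R$ since $\phi$ is onto. Because the growth rate of an affine algebra is independent of the chosen generating set (in the equivalence recalled above), $R/I$ and $R$ have the same growth rate; in particular $\GKdim(R/I)=\GKdim(R)$.

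Now suppose $I\neq 0$. The hypothesis then says the growth of $R/I$ is strictly less than the growth of $R$, contradicting the previous paragraph. Hence $I=0$, so $\phi$ is injective as well as surjective, i.e.\ an isomorphism, and $R$ is Hopfian. (For $I=R$ there is nothing to check, and $I=0$ is exactly the case being excluded, so the hypothesis is really only needed for nonzero proper $I$.)

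For the ``in particular'' clause I would check that a strict drop in $\GKdim$ forces a strict drop in growth rate, i.e.\ that the order on growth rates refines the order on Gelfand--Kirillov dimensions. If the growth of $R/I$ were not strictly smaller than that of $R$, there would be constants $c,k$ with $d_n\le c\,d'_{kn}$ for all $n$, where $d_n=\dim_F R_n$ and $d'_n=\dim_F(R/I)_n$; passing to $\varlimsup_n\log_n$ (the factor $c$ and the rescaling $n\mapsto kn$ do not affect this upper limit) gives $\GKdim(R)\le\GKdim(R/I)$, contradicting the assumption $\GKdim(R/I)<\GKdim(R)$. Thus the hypothesis of the first statement holds and $R$ is Hopfian. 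The argument has no genuine obstacle; the only point needing a line of computation is this last translation between the $\GKdim$ inequality and the growth-rate inequality, together with the bookkeeping that all the relevant generating sets yield the same growth.
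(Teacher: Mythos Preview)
Your argument is correct and is exactly the paper's approach: the paper's own proof is the two-line observation that $\varphi(R)\cong R/\ker\varphi$ forces $R$ and $R/\ker\varphi$ to have the same growth, hence $\ker\varphi=0$; you have simply unpacked this with the explicit generating sets and added the (routine) verification of the ``in particular'' clause, which the paper leaves implicit. One small remark: in your last paragraph the implication ``not strictly smaller $\Rightarrow$ $d_n\le c\,d'_{kn}$'' tacitly uses that growth rates of $R/I$ and $R$ are always comparable via the surjection $R\to R/I$ (so $d'_n\le d_n$ with the induced generators); mentioning this one line would make the contrapositive airtight, but the argument is already essentially complete.
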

\begin{proof} For any epimorphism $\varphi :R \to R,$ one has $\varphi
(R) \cong R/ \ker \varphi,$ but then $\varphi (R)$ and $R$ have
the same growth rates, implying $ \ker \varphi = 0.$
\end{proof}

The   hypothesis of Lemma~\ref{easylem} holds for prime
PI-algebras, cf.~\cite[Theorem~11.2.12]{BR}, so we have:

\begin{corollary} Any prime affine PI-algebra is Hopfian.\end{corollary}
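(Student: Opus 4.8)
The plan is to reduce to Lemma~\ref{easylem} in its second (Gelfand--Kirillov) form, so that it suffices to show $\GKdim(R/I) < \GKdim(R)$ for every nonzero ideal $I$ of the prime affine PI-algebra $R$. (The case $I = R$ is vacuous, since then $R/I = 0$; and note that the lemma is really only applied to nonzero $I$, because its proof uses $\varphi(R) \cong R/\ker\varphi$ to conclude $\ker\varphi = 0$.) Along the way one should first recall that an affine PI-algebra has finite Gelfand--Kirillov dimension, so that $\GKdim(R)$ is a genuine finite invariant; for a prime affine PI-algebra it moreover equals $\trdeg_F Z$, the transcendence degree over $F$ of the center $Z$ of the (Goldie) ring of fractions $\Frac R$, by the structure theory assembled in \cite[Theorem~11.2.12]{BR}.

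The key ring-theoretic input is Rowen's theorem that every nonzero two-sided ideal of a prime PI-ring has nonzero intersection with the center. Applying this to $I$, choose $0 \ne c \in I \cap Z(R)$. Then $R/I$ is a homomorphic image of $R/cR$, so $\GKdim(R/I) \le \GKdim(R/cR)$, and it is enough to prove the strict inequality $\GKdim(R/cR) < \GKdim(R)$.

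For this last point I would pass to the central localization and observe that $c$ is a nonzero central non-zero-divisor, so killing it drops the transcendence degree of the center of the ring of fractions by one; this is exactly the content of \cite[Theorem~11.2.12]{BR}, giving $\GKdim(R/cR) \le \GKdim(R) - 1$. Combining the displayed inequalities yields $\GKdim(R/I) \le \GKdim(R) - 1 < \GKdim(R)$, and since a strict drop in Gelfand--Kirillov dimension forces the growth rate of $R/I$ to be strictly smaller than that of $R$, Lemma~\ref{easylem} applies and shows $R$ is Hopfian.

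The step I expect to be the main obstacle is the strict decrease $\GKdim(R/cR) < \GKdim(R)$: this is precisely where the PI structure theory does the work, since one needs both that $\GKdim$ of an affine prime PI-algebra is computed by $\trdeg_F Z(\Frac R)$ (via Artin--Tate together with Noether normalization applied to the affine commutative center) and that this quantity is ``exact'' under quotients by central elements. Once \cite[Theorem~11.2.12]{BR} is invoked for this, the remaining steps are routine.
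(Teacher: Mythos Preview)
Your approach is essentially the paper's: both invoke Lemma~\ref{easylem} in its $\GKdim$ form and appeal to \cite[Theorem~11.2.12]{BR} for the strict drop $\GKdim(R/I) < \GKdim(R)$ when $I \ne 0$. The paper simply cites that result as a black box, whereas you partially unpack it via Rowen's theorem (a nonzero ideal of a prime PI-ring meets the center nontrivially) to reduce to a principal central ideal $cR$.

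One small caution on your exposition: the phrase ``drops the transcendence degree of the center of the ring of fractions'' applied to $R/cR$ is not quite well-posed, since $R/cR$ need not be prime (or even semiprime), so it has no ring of fractions in the Goldie sense and the $\GKdim = \trdeg$ formula does not apply to it directly. The clean way to finish your reduction is to use that a prime affine PI-algebra $R$ is a finite module over its affine central domain $C$; then $R/cR$ is finite over the image of $C$, hence $\GKdim(R/cR) \le \GKdim(C/cC) \le \dim C - 1 = \GKdim(R) - 1$ by Krull's principal ideal theorem. Since you ultimately defer to \cite[Theorem~11.2.12]{BR} for the conclusion anyway, this is an expository wrinkle rather than a genuine gap.
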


On the other hand, \cite{AFS} provides an affine PI-algebra that  is
not Hopfian.

\begin{example} $R$ and $R/I$ could have different growth rates
even if $\GKdim (R/I)=\GKdim(R)$. For example, let $R$ be the
subalgebra of the free associative algebra generated by all subwords
of $u_n$ for any $n$, where $u_1 = xyx$ and $u_{n+1} =
x^{10^n}u^nx^{10^n}yx^{10^n}u_nx^{10^n},$ a prime algebra, of
$\GKdim  2,$ and $I$ be the ideal generated by all words of degree 2
in $y$. Then  $\GKdim (R/I) = 2,$ although the growth rate of $R/I$
is less than that of $R$. This example is not a PI-algebra.
\end{example}

A \textbf{$T$-ideal} of an ideal $R$ is an ideal invariant under
all ring endomorphisms.

\begin{lem}\label{easierlem}If $\mathcal I$ is a $T$-ideal of $R$, then any endomorphism $\varphi$ of $R$
induces an endomorphism of $R/\mathcal I.$\end{lem}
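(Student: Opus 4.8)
The plan is to invoke the universal property of the quotient: an endomorphism $\varphi$ of $R$ descends to $R/\mathcal I$ precisely when $\varphi(\mathcal I)\subseteq \mathcal I$, and this inclusion is exactly what the $T$-ideal hypothesis provides. So the first step is to record that, since $\mathcal I$ is invariant under all ring endomorphisms of $R$ and $\varphi$ is one such, we have $\varphi(\mathcal I)\subseteq \mathcal I$. (Note we do not need equality here, which is why the conclusion is that $\varphi$ induces an \emph{endomorphism} of $R/\mathcal I$, not necessarily an automorphism.)

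Next I would define $\bar\varphi : R/\mathcal I \to R/\mathcal I$ by $\bar\varphi(r+\mathcal I) = \varphi(r)+\mathcal I$, and check it is well defined: if $r+\mathcal I = r'+\mathcal I$, i.e.\ $r-r'\in\mathcal I$, then $\varphi(r)-\varphi(r') = \varphi(r-r')\in \varphi(\mathcal I)\subseteq \mathcal I$, so $\varphi(r)+\mathcal I = \varphi(r')+\mathcal I$. Finally I would verify that $\bar\varphi$ respects the ring operations, which is immediate from the fact that $\varphi$ does and that the canonical surjection $\pi:R\to R/\mathcal I$ is a ring homomorphism; indeed $\bar\varphi$ is characterized by $\bar\varphi\circ\pi = \pi\circ\varphi$, and since $\pi$ is surjective this determines $\bar\varphi$ uniquely and transports linearity and multiplicativity from $\varphi$ to $\bar\varphi$.

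There is no real obstacle here; the only thing worth flagging is to be explicit that the needed property of a $T$-ideal is stability under endomorphisms (as opposed to only automorphisms), so that it applies to the given $\varphi$ without assuming $\varphi$ is invertible. This is precisely the point that makes the lemma useful in combination with Lemma~\ref{easylem}: one first passes to a quotient $R/\mathcal I$ by a $T$-ideal, where growth considerations may be easier to control, and the induced map $\bar\varphi$ remains an epimorphism whenever $\varphi$ is.
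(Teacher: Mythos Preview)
Your proof is correct and follows exactly the paper's approach: define $\bar\varphi(a+\mathcal I)=\varphi(a)+\mathcal I$ and observe it is well defined because $\varphi(\mathcal I)\subseteq\mathcal I$ by the $T$-ideal hypothesis. The paper omits the routine verification that $\bar\varphi$ is a ring homomorphism, which you spell out, but the argument is the same.
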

\begin{proof} Define $\varphi : R/\mathcal I \to R/\mathcal I$ by
$\varphi (a + \mathcal I) = \varphi (a) + \mathcal I.$ This is
well-defined since $\varphi(\mathcal I) \subseteq \mathcal I$ by
hypothesis.
\end{proof}

We say that $R$ is \textbf{$T$-residually Hopfian} if the
intersection of those $T$-ideals $I$ of $R$ for which $R/I$ is
Hopfian is $0.$

The following result, whose proof follows that of
\cite[Theorem~7]{BLK}, attributed to Markov, unifies instances of
Hopfian algebras.

\begin{proposition}\label{resH} Any  $T$-residually Hopfian  algebra is  Hopfian.
  \end{proposition}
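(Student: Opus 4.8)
The plan is to unwind the definitions directly. Let $\varphi\colon R\to R$ be an epimorphism; the goal is to show $\ker\varphi=0$. I would fix an arbitrary $T$-ideal $I$ of $R$ for which $R/I$ is Hopfian, and write $\pi\colon R\to R/I$ for the canonical surjection. By Lemma~\ref{easierlem}, $\varphi$ descends to an endomorphism $\bar\varphi\colon R/I\to R/I$ satisfying $\bar\varphi\circ\pi=\pi\circ\varphi$. Since $\varphi$ is onto, so is $\bar\varphi$, and since $R/I$ is Hopfian, $\bar\varphi$ is an isomorphism; in particular it is injective.

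The next step is a short diagram chase. From $\bar\varphi\circ\pi=\pi\circ\varphi$ one computes $\varphi^{-1}(I)=\ker(\pi\circ\varphi)=\ker(\bar\varphi\circ\pi)=\pi^{-1}(\ker\bar\varphi)=\pi^{-1}(0)=I$, where the crucial equality $\ker\bar\varphi=0$ comes from the injectivity just established. Consequently $\ker\varphi=\varphi^{-1}(0)\subseteq\varphi^{-1}(I)=I$.

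To finish, I would observe that this containment holds for every $T$-ideal $I$ with $R/I$ Hopfian, so $\ker\varphi$ lies in the intersection of all such ideals. That intersection is $0$ by the hypothesis that $R$ is $T$-residually Hopfian, whence $\ker\varphi=0$, $\varphi$ is injective, and therefore an isomorphism.

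The one place that deserves care — and the reason $T$-ideals, not arbitrary ideals, appear in the definition — is the passage to the quotient: $\bar\varphi$ is well defined precisely because $\varphi(I)\subseteq I$ for a $T$-ideal, and one must note that $\bar\varphi$ inherits surjectivity from $\varphi$. After that, the only subtlety is that deducing $\ker\varphi\subseteq I$ uses the \emph{injectivity} of $\bar\varphi$, not merely its surjectivity, so it is worth making explicit that the Hopfian hypothesis on $R/I$ is invoked exactly to upgrade the epimorphism $\bar\varphi$ to an isomorphism before drawing the conclusion. I do not anticipate any genuine obstacle beyond this bookkeeping.
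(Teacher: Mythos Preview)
Your proof is correct and follows essentially the same approach as the paper: descend $\varphi$ to $R/I$ via Lemma~\ref{easierlem}, use the Hopfian hypothesis on the quotient to force injectivity of $\bar\varphi$, and conclude $\ker\varphi\subseteq I$ for every such $I$. The paper frames the same argument as a proof by contradiction (pick $r\neq 0$ in $\ker\varphi$, choose a $T$-ideal $\mathcal I$ with $R/\mathcal I$ Hopfian avoiding $r$, and observe that $\bar\varphi$ would then be onto but not one-to-one), whereas you phrase it directly and make the diagram chase explicit; the content is identical.
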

\begin{proof}   Let $\varphi: R \to R$ be an epimorphism, with some nonzero element $r\in
    \ker (\varphi)$. By hypothesis there is some $T$-ideal
     $\mathcal I$ not containing~$r$, for which $R/\mathcal I$ is
     Hopfian, but
Lemma~\ref{easierlem} implies that $R/\mathcal I$ is not
     Hopfian, a contradiction.\end{proof}

We are ready to treat the free algebra.

\begin{theorem} [\cite{OR}] When $X$ is a finite set of noncommuting indeterminates, the free associative algebra $F\{ X\} $ is Hopfian.
  \end{theorem}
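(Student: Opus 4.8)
The strategy is to exhibit $F\{X\}$ as a \emph{$T$-residually Hopfian} algebra and then quote Proposition~\ref{resH}. Note that Lemma~\ref{easylem} cannot be applied to $F\{X\}$ directly, since for $|X|\ge 2$ the quotient of $F\{X\}$ by the ideal generated by one of the indeterminates is again a free algebra of exponential growth, so $F\{X\}$ has proper quotients of the same growth rate. Before starting I would reduce to the case that the base field $F$ is infinite: for any field extension $K\supseteq F$, an epimorphism $\varphi\colon F\{X\}\to F\{X\}$ extends to an epimorphism $K\{X\}\to K\{X\}$ (its image is a $K$-subalgebra of $K\{X\}$ containing all the generators), so if $K\{X\}$ is Hopfian then this extension, and hence $\varphi$ itself, is injective.

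So assume $F$ is infinite and, for each $n\ge 1$, let $\mathcal I_n\subseteq F\{X\}$ be the $T$-ideal of all polynomial identities of the matrix algebra $M_n(F)$. Then $F\{X\}/\mathcal I_n$ is the algebra of $n\times n$ generic matrices in the variables $X$: it is affine by construction and it is PI, as it satisfies the standard identity $s_{2n}$. The step I expect to be the main obstacle -- the one that requires an input from outside the present note -- is that $F\{X\}/\mathcal I_n$ is \emph{prime}: over an infinite field the generic matrix algebra is in fact a domain (a classical theorem of Amitsur), while for $|X|=1$ it is just the polynomial ring $F[x_1]$. Granting this, the corollary above that every prime affine PI-algebra is Hopfian shows that $F\{X\}/\mathcal I_n$ is Hopfian for every $n$.

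Finally I would verify that $\bigcap_{n\ge 1}\mathcal I_n=0$; then the intersection of those $T$-ideals $I$ of $F\{X\}$ for which $F\{X\}/I$ is Hopfian is also $0$, i.e.\ $F\{X\}$ is $T$-residually Hopfian, and Proposition~\ref{resH} completes the proof. Given $0\ne f\in F\{X\}$ of degree $d$, let $N$ be the ideal generated by all monomials of degree greater than $d$ and put $A=F\{X\}/N$; then $A$ is finite-dimensional over $F$ and the image of $f$ in $A$ is nonzero, so $f\notin N$. Taking $m=1+\dim_F A$, the left regular representation of $A$ (on its unitalization) embeds $A$ into $M_m(F)$, so every polynomial identity of $M_m(F)$ is a polynomial identity of $A$; and every polynomial identity of $A$ lies in $N$, since it vanishes in particular on the generating tuple $\bar x_1,\dots,\bar x_{|X|}$ of $A$. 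Hence $\mathcal I_m\subseteq N$, whence $f\notin\mathcal I_m$, and as $f$ was an arbitrary nonzero element we conclude $\bigcap_{n\ge 1}\mathcal I_n=0$.
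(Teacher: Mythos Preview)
Your proof is correct and follows exactly the paper's strategy: exhibit $F\{X\}$ as $T$-residually Hopfian via the $T$-ideals $\mathcal I_n$ of identities of $n\times n$ (generic) matrices, use the corollary that prime affine PI-algebras are Hopfian for the quotients $F\{X\}/\mathcal I_n$, and apply Proposition~\ref{resH}. You supply more detail than the paper does---in particular the reduction to an infinite base field and an explicit argument for $\bigcap_n\mathcal I_n=0$ via a finite-dimensional quotient and its regular representation, whereas the paper simply sets $n=\deg f$ and tacitly uses that $M_n$ has no nonzero identity of degree $\le n$.
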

\begin{proof}   Let $\varphi: F\{ X\} \to F\{ X\}$ be an epimorphism, with some nonzero polynomial $f\in
    \ker (\varphi)$. Let $n=\deg(f).$
    Let  $\mathcal I_n$ be the $T$-ideal of identities of the algebra of generic $n\times
    n$ matrices. Then $\cap \mathcal I_n = 0,$ so $F\{ X\} $ is  $T$-residually
    Hopfian, and we apply Proposition~\ref{resH}.\end{proof}

\section{Appendix: The Jacobian conjecture and the free metabelean algebra}

Dicks and Lewin~\cite[Proposition~3.1]{DL} proved that an
endomorphism of  the free associative algebra $F\{ X\} $ is an
epimorphism iff its Jacobian matrix is invertible. Their proof was
utilized by Umirbaev to obtain the following result:

\begin{theorem}[\cite{U1}]
If any endomorphism $\varphi$ of the polynomial ring with Jacobian
$1$ could be lifted to some endomorphism of the free metabelean
algebra with invertible Jacobi matrix, then the Jacobian Conjecture
would hold.
  \end{theorem}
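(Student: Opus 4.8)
The plan is to reproduce, in the present setting, the two-step pattern that governs $F\{X\}$ --- first ``invertible Jacobi matrix $\Rightarrow$ epimorphism'', then ``epimorphism $\Rightarrow$ automorphism'' --- but with the Hopfian input now applied to the polynomial ring rather than to the free metabelian algebra. Write $P=F[x_1,\dots,x_n]$ for the polynomial ring, $M$ for the free metabelian associative algebra on $x_1,\dots,x_n$, and $\pi\colon M\twoheadrightarrow P$ for the abelianization map, i.e.\ the quotient of $M$ by the ideal generated by all commutators. Fix an endomorphism $\varphi\colon P\to P$ with Jacobian $1$. By hypothesis $\varphi$ lifts to an endomorphism $\tilde\varphi\colon M\to M$ with $\pi\tilde\varphi=\varphi\pi$ whose Jacobi matrix is invertible over $M$; the goal is to show $\varphi$ is an automorphism, which is exactly the Jacobian Conjecture for $\varphi$.

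The first step is the metabelian analogue of \cite[Proposition~3.1]{DL}: if an endomorphism of $M$ has invertible Jacobi matrix over $M$, then it is an epimorphism. This is the substantive ingredient supplied by Umirbaev in \cite{U1}; it is obtained from the explicit description of $M$ via the Magnus-type embedding of $M$ into the ring of upper triangular $2\times2$ matrices with diagonal entries in $P$ and off-diagonal entry in the free $P$-module of K\"ahler differentials $\Omega_{P/F}\cong P^n$, under which the Fox-type partial derivatives and hence the Jacobi matrix can be computed. Granting it, $\tilde\varphi$ is surjective.

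The second step is formal. Since $\tilde\varphi$ and $\pi$ are both surjective, the composite $\pi\tilde\varphi=\varphi\pi\colon M\to P$ is surjective; as $\pi$ is onto, $\varphi(P)=\varphi(\pi(M))=P$, so $\varphi$ is an epimorphism of $P$. Now $P$ is a prime affine PI-algebra (a finitely generated commutative domain), hence Hopfian by the corollary above; alternatively $\GKdim(P/I)<\GKdim(P)$ for every nonzero ideal $I$, so Lemma~\ref{easylem} applies. Therefore the epimorphism $\varphi$ is an automorphism. Since $\varphi$ was an arbitrary endomorphism of $P$ with Jacobian $1$, the Jacobian Conjecture holds.

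The main obstacle is the first step. Unlike the descent in the second step, it is not a formal consequence of anything earlier: one has to use the concrete structure of $M$ (or of its module of differentials over $P$) to see that invertibility of the Jacobi matrix over the \emph{noncommutative} ring $M$ --- which is strictly stronger than invertibility of its image over $P$, the latter amounting merely to the still-open condition ``Jacobian $1$'' --- already forces $\tilde\varphi$ to be onto. A secondary point to verify is the compatibility of the two notions of Jacobian, namely that the determinant of the Jacobi matrix of $\tilde\varphi$ maps under $\pi$ to the Jacobian of $\varphi$, so that the hypothesis is indeed being invoked for the correct family of endomorphisms $\varphi$.
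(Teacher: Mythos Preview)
The paper does not actually supply a proof of this theorem; it is quoted from Umirbaev~\cite{U1}, and the only hint the paper gives is the sentence preceding the statement, namely that Dicks and Lewin's Proposition~3.1 (an endomorphism of $F\{X\}$ is onto iff its Jacobi matrix is invertible) ``was utilized by Umirbaev to obtain'' it. Your outline follows exactly that template: a metabelean analogue of the Dicks--Lewin criterion gives surjectivity of the lift $\tilde\varphi$, the commuting square $\pi\tilde\varphi=\varphi\pi$ then forces $\varphi$ onto, and the Hopfian property of the polynomial ring (a prime affine PI-algebra, so Lemma~\ref{easylem} or its corollary applies) finishes. This matches the paper's indicated approach as closely as is possible given that the paper records no argument of its own.

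You are also right to flag that the substantive content sits entirely in the first step, which you explicitly do not prove but defer to~\cite{U1}; since the paper does the same, that is not a gap relative to the paper. Your secondary remark about compatibility of the two Jacobians under $\pi$ is well taken and is indeed what makes the hypothesis ``lift with invertible Jacobi matrix'' a genuine strengthening of ``Jacobian~$1$'' rather than a restatement of it.
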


\begin{remark} The free meta-abelian algebra was used by Umibaev~\cite{U2},  and Drensky and
Yu \cite{DY}  to prove the Anick and strong Anick conjectures.
\end{remark}

Also see  \cite{S} for a treatment of the Jacobian conjecture over a
free algebra, using his deep theory of skew fields, and
 \cite{BBRY} for an overview of Yagzev's method to attack
the Jacobian conjecture.

\end{document}